\newcommand{\tensor}{\mbox{$\otimes$}}
\newcommand{\pa}{\bindnasrepma}
\newtheorem{defn}{Definition}
\newtheorem{lem}{Lemma}
\newtheorem{thm}{Theorem}
\newtheorem{cor}{Corollary}
\newtheorem{rem}{Remark}
\title{Deducibility of Identicals, Reflection Principle and Synthetic Connectives}
\author{Yuki Nishimuta}
\date{}
\begin{document}
\maketitle

\begin{abstract}
Sambin et al. (2000) introduced Basic Logic as a uniform framework for various logics.  At the same time, they also introduced the principle of reflection as a criterion for being a connective in Basic Logic. In this paper, we make explicit the relationship between Hacking's deducibility of identicals condition (Hacking, 1979) and the principle of reflection by proving their equivalence.  Moreover, despite Sambin et al.'s conjecture that only six connectives satisfy the principle of reflection, we show that a logical connective satisfies principle of reflection if and only if it is Girard's synthetic connective.

 %If we regard the principle of reflection as a procedure for obtaining inference rules of a logical connective,  the principle of reflection is considered as a procedure for obtaining right (resp. left) rules of a connective from its left (resp. right) rules.

\end{abstract}

\section{Introduction}   

In this paper, we investigate the deducibility of identicals condition. The deducibility of identicals condition (abbreviated as DoI) was introduced by Hacking (1979) as a deducibility condition along with transitivity (i.e. the admissibility of the cut rule) and dilution (i.e. the admissibility of the weakening rule).  The importance of transitivity condition is clear from the well-known example of Prior's tonk connective (Belnap, 1962). On the contrary, such a persuasive example is not known in the case of the DoI condition and its  necessity  has not yet been fully clarified.

While the transitivity condition has been extensively discussed in the literature, relatively few studies have focused on the DoI condition. The aim of this paper is to provide a technical basis for further discussion of the DoI condition by elucidating its relationship with another criterion for logical connectives, namely Sambin et al.'s principle of reflection (reflection principle).

Sambin et al. introduced Basic Logic as a uniform framework encompassing various logics.  At the same time, they introduced the principle of reflection, as a requirement that logical connectives of Basic Logic must satisfy. In their approach, logical connectives are obtained by solving definitional equations that relate logical connectives to meta-linguistic links between assertions.

It has been suggested that a connective $\mathcal{C}$ satisfies Sambin et al.'s principle of reflection if and only if $\mathcal{C}$ satisfies DoI and the main step of the cut elimination (Schroeder-Heister, 2013).
However, this equivalence has not been fully articulated in the literature. In this paper, we establish this equivalence within our formal setting.

In order to clarify the relationship between the DoI condition and the principle of reflection, we reinterpret the reflection principle from a different perspective.  Sambin et al. begin with a definitional equation and then construct inference rules from it (Sambin et al., 2000, p.983). By contrast, we start with arbitrary inference rules, construct a corresponding definitional equation, and then attempt to solve it following their procedure. Sambin et al. do not discuss cases in which a definitional equation is unsolvable, that is, cases in which their procedure fails. We make explicit the conditions under which such unsolvability arises.

Finally, in order to characterize which logical connectives satisfy the principle of reflection and which do not, we consider $n$-ary connectives rather than restricting attention to binary connectives.

The main contribution of this paper is to make explicit the precise relationship between Hacking's deducibility of identicals condition and Sambin et al.'s principle of reflection. We show that these two criteria are equivalent once the main step of cut elimination is assumed, thereby clarifying a connection that has previously only been suggested in the literature. Furthermore, by reversing Sambin et al.'s direction - from inference rules to definitional equations rather than vice versa - we reveal that solvability of definitional equations is not an intrinsic property of a connective, but can depend on its presentation. This observation leads to an exact characterization of when definitional equations are solvable: namely, when the connective has a singleton, non-context-changing formation rule satisfying visibility. As a consequence, we prove that a logical connective satisfies the reflection principle if and only if it is a Girard's synthetic connective.

\section{Preliminaries}

\subsection{Deducibility of Identicals}
We briefly explain the deducibility of identicals, the principle of reflection and synthetic connectives in this chapter. The deducibility of identicals condition is formalized in (Naibo and Petrolo, 2015, p.147) as follows.

We must be able to show that logical constants are uniquely identified by their inference rules: given an $n$-ary operator $\mathcal{C}$ and arbitrary formulas $A_1,\dots,A_n$, the sequent  $\mathcal{C}(A_1,\dots,A_n)\vdash\mathcal{C}(A_1,\dots,A_n)$ has to be derivable using at least one rule for $\mathcal{C}$ and, when needed, only the identity axiom to close the derivation.

Whether this condition is satisfied is checked by the following procedure:

Start by applying the left (resp. right) rule(s) of the operator under analysis, and immediately conclude by applying its right (resp. left) rule(s)  (Naibo and Petrolo, 2015, p.147).

Thus, a procedure for deciding the satisfiability of the deducibility of identicals condition amounts to a proof search. The DoI condition is strictly weaker than the uniqueness condition (Naibo and Petrolo, 2015, pp.153-154). The uniqueness condition states that for two logical connectives $\mathcal{C}, \mathcal{C}'$ with the same inference rules, both $\mathcal{C}\vdash\mathcal{C}'$ and $\mathcal{C}'\vdash\mathcal{C}$ are deducible (Belnap, 1962).

The DoI condition corresponds to the $\eta$-expansion in the presence of structural rules. However, this correspondence fails in the absence of structural rules: for example, the tensor connective satisfies the DoI condition but does not satisfy $\eta$-expansion.

\subsection{Principle of Reflection}
% 反映原理の概説

The principle of reflection states that a logical connective reflects a meta-linguistic link between assertions at the level of the object language (Sambin et al., 2000, p.~980). According to Sambin et al.\ (2000), ``A logical constant obeys the principle of reflection if it is characterized semantically by an equation binding it to a metalinguistic link between assertions, and if its synthetic inference rules are obtained by solving the equation" (p.279).

We illustrate the principle of reflection using the cases of multiplicative conjunction and additive conjunction. The corresponding cases for disjunction are obtained symmetrically by interchanging the left and right sides of sequents.

  A definitional equation of the tensor connective is as follows: for all $\Delta$, $A\tensor B\vdash \Delta$ if and only if $A, B\vdash \Delta$.

The `if' direction corresponds to the following formation rule:

\begin{center}
\def\fCenter{\ \vdash\ }
\Axiom$A, B\fCenter\Delta$
\RightLabel{$\tensor$-formation}
\UnaryInf$A\tensor B\fCenter\Delta$
\DisplayProof
\end{center}

The `only if' direction corresponds to the following rule:

\begin{center}
\def\fCenter{\ \vdash\ }
\Axiom$A\tensor B\fCenter\Delta$
\RightLabel{implicit $\tensor$-reflection}
\UnaryInf$A, B\fCenter\Delta$
\DisplayProof
\end{center}

This implicit reflection rule contains the connective under consideration in its premise. Therefore, we construct an equivalent rule that does not contain the connective in the premise.
First, we substitute $A \tensor B$ for the context $\Delta$ and obtain the identity. Second, we replace $A$ and $B$ in the sequent $A, B \vdash A \tensor B$ by $\Gamma_1$ and $\Gamma_2$, respectively, using the cut rule.

\begin{center}
\def\fCenter{\ \vdash\ }
\Axiom$\Gamma\fCenter  A$
\Axiom$A, B\fCenter A\tensor B$
\RightLabel{Cut}
\BinaryInf$\Gamma, B\fCenter A\tensor B$
\Axiom$\Gamma' \fCenter B$
\RightLabel{Cut}
\BinaryInf$\Gamma, \Gamma' \fCenter A\tensor B$
\DisplayProof
\end{center}

Finally, we obtain the following rule:

\begin{center}
\def\fCenter{\ \vdash\ }
\Axiom$\Gamma\fCenter  A$
\Axiom$\Gamma'\fCenter B$
\RightLabel{explicit $\tensor$-reflection}
\BinaryInf$\Gamma, \Gamma'\fCenter A\tensor B$
\DisplayProof
\end{center}

In the case of  additive conjunction, we solve the following definitional equation; $\Gamma\vdash A\with B$ if and only if $\Gamma\vdash A$ and $\Gamma\vdash B$ .

The `if' direction corresponds to the following formation rule:

\begin{center}
\def\fCenter{\ \vdash\ }
\Axiom$\Gamma\fCenter  A$
\Axiom$\Gamma\fCenter B$
\RightLabel{$\with$-formation}
\BinaryInf$\Gamma\fCenter A\with B$
\DisplayProof
\end{center}
\medskip

The `only if' direction corresponds to the following rules:

\begin{center}
\def\fCenter{\ \vdash\ }
\Axiom$\Gamma\fCenter A\with B$
\RightLabel{implicit $\with$-reflection 1}
\UnaryInf$\Gamma\fCenter A$
\DisplayProof
\medskip
\def\fCenter{\ \vdash\ }
\Axiom$\Gamma\fCenter A\with B$
\RightLabel{implicit $\tensor$-reflection 2}
\UnaryInf$\Gamma\fCenter B$
\DisplayProof
\end{center}

Substituting $A \with B$ into $\Gamma$ yields the sequents $A \with B \vdash A$ and $A \with B \vdash B$. Replacing $A$ and $B$ by an arbitrary context $\Delta$ using the cut rule, we obtain the following explicit reflection rules:

\begin{center}
\def\fCenter{\ \vdash\ }
\Axiom$A\fCenter\Delta$
\RightLabel{explicit $\with$-reflection 1}
\UnaryInf$A\with B\fCenter\Delta $
\DisplayProof
\medskip
\Axiom$B\fCenter\Delta$
\RightLabel{explicit $\with$-reflection 2}
\UnaryInf$A\with B\fCenter\Delta $
\DisplayProof
\end{center}

% implicationは考察しない
\begin{rem}
We do not consider implication in this paper, since its treatment requires nested sequents. Accordingly, implication rules and constants are excluded from the results presented below.
\end{rem}

\medskip

\subsection{Synthetic Connective}
% 統合的論理結合子の概説
We assume basic background knowledge of linear logic in this paper. The synthetic connectives are explained by the polarity theory of linear logic (Andreoli, 1992). In multiplicative additive linear logic ($\mathsf{MALL}$), connectives and inference rules are classified into two polarity classes: the tensor $\tensor$ and plus $\oplus$ have positive polarity, while with $\with$ and par $\pa$ have negative polarity. The polarity of an inference rule is determined by the polarity of its principal formula. Inference rules for synthetic connectives are obtained by decomposing a sequent that contains only a single formula $A$, where $A$ is composed exclusively of connectives (and meta-variables) of the same polarity.

% Girardのメタ変数の扱い　

\begin{figure}[H]
\begin{framed}
\begin{center}
\def\fCenter{\ \vdash\ }
\Axiom$\Gamma_1\fCenter A$
\Axiom$\Gamma_2\fCenter B$
\BinaryInf$\Gamma_1,\Gamma_2\fCenter A\tensor(B\oplus C)$
\DisplayProof
\medskip
\def\fCenter{\ \vdash\ }
\Axiom$\Gamma_1\fCenter A$
\Axiom$\Gamma_2\fCenter C$
\BinaryInf$\Gamma_1,\Gamma_2\fCenter A\tensor(B\oplus C)$
\DisplayProof
\end{center}
\begin{center}
\def\fCenter{\ \vdash\ }
\Axiom$A, B\fCenter \Delta$
\Axiom$A, C\fCenter \Delta$
\BinaryInf$A\tensor(B\oplus C)\fCenter \Delta$
\DisplayProof
\end{center}
\caption{An example of synthetic connective}\label{synthetic example}
\end{framed}
\end{figure}

If synthetic connectives are defined in terms of binary connectives, as in Girard (1999), they are conceptually dependent on those connectives. For this reason, we define inference rules for synthetic connectives directly.

\begin{defn}
A logical connective $\mathcal{C}$ is a synthetic connective if its inference rules are an instance of either  scheme in Figure \ref{synthetic_rule_1} or scheme in Figure \ref{synthetic_rule_2} (where $m\in\mathbb{N}$, $A_{pq}\in\{A_1,\dots,A_n\}$, $1\le p, g\le m$, $1\le  q\le k_g$).

\begin{figure}[H]
\begin{framed}
\begin{center}
\AxiomC{$\Gamma\ \vdash\  A_{11},\dots,A_{1k_{1}}$}
\AxiomC{$\cdots$}
\AxiomC{$\Gamma\ \vdash\ A_{m1},\dots,A_{1k_{m}}$}
\RightLabel{$\mathcal{C}$-right rule}
\TrinaryInfC{$\Gamma\ \vdash\ \mathcal{C}(A_1,\dots, A_n)$}
\DisplayProof
\end{center}
\begin{center}
\def\fCenter{\ \vdash\ }
\Axiom$A_{i1},\dots,A_{ik_{i}}\fCenter \Delta$
\RightLabel{$\mathcal{C}$-left rule}
\UnaryInf$ \mathcal{C}(A_1,\dots, A_m)\fCenter \Delta$
\DisplayProof
\end{center}
\caption{Inference rule scheme I of synthetic connective } \label{synthetic_rule_1}
\end{framed}
\end{figure}

\begin{figure}[H]
\begin{framed}
\begin{center}
\AxiomC{$ A_{11},\dots,A_{1k_{1}}\ \vdash\ \Delta $}
\AxiomC{$\cdots$}
\AxiomC{$A_{m1},\dots,A_{1k_{m}}\ \vdash\ \Delta$}
\RightLabel{$\mathcal{C}$-right rule}
\TrinaryInfC{$\mathcal{C}(A_1,\dots, A_n)\ \vdash\ \Delta$}
\DisplayProof
\end{center}
\begin{center}
\def\fCenter{\ \vdash\ }
\Axiom$\Gamma\fCenter A_{i1},\dots,A_{ik_{i}}$
\RightLabel{$\mathcal{C}$-left rule}
\UnaryInf$\Gamma \fCenter \mathcal{C}(A_1,\dots, A_m)$
\DisplayProof
\end{center}
\caption{Inference rule scheme II of synthetic connective} \label{synthetic_rule_2}
\end{framed}
\end{figure}

\end{defn}

In order to investigate the principle of reflection for synthetic connectives in this paper, we exclude negation from the constituents of synthetic connectives. Consequently, the active formulas appear on the same side of the sequent. Synthetic connectives as presented in Girard (1999) satisfy this property\footnote{
Girard explains that a new connective is not defined by an arbitrary combination of binary connectives; rather, only connectives of the same polarity can form a connective. The reason is that the deducibility of identicals condition fails when connectives of different polarities are combined (Girard, 1999, p.271).}.

%%%%%%%%%%%%%%%%%%%%%%%%%%%%%%%%%%%%%%%%%%%%%
%%%%%%%%%%%%%%%%%%%%%%%%%%%%%%%%%%%%%%%%%%%%%    main results

\section{Equivalence between two criteria}
Before proceeding, we clarify the terminology used in this section.  A logical connective is identified with a pair of sets of inference rules, whereas a definitional equation is a presentation-dependent object constructed from a chosen set of rules; accordingly, solvability is a property of definitional equations, not of connectives themselves, and is independent of cut-admissibility of the connective.

Hacking's deducibility of identicals condition can be regarded as a procedure for obtaining left (respectively, right) rules from rules on the opposite side. Sambin et al. considered definitional equations by examining all possible combinations of the meta-implication ``yields'' and the meta-conjunction ``and'', and then associated each solvable combination with a logical constant.

By contrast, we proceed in the opposite direction: starting from a given logical connective, we associate it with a corresponding definitional equation and then apply the procedure of the reflection principle to that equation. However, this approach raises the following issue. Suppose that the right rule of the tensor connective is given. The definitional equation obtained from the tensor's right rule is;  $\Gamma_1, \Gamma_2\vdash A\tensor B$ if and only if $\Gamma_1\vdash A$ and $\Gamma_2\vdash B$.  

This equation is unsolvable. By contrast, the definitional equation obtained from the tensor's left rule is solvable. Thus, the choice of which rule we start from cannot be ignored.  In other words, solvability is not determined solely by the connective itself, but can depend on its presentation (i.e. on whether one starts from the left or the right rules), and the visibility condition together with the singleton and non-context-changing requirements is precisely what controls this dependence.

Sambin et al. begin by constructing a definitional equation and then deriving a formation rule together with implicit reflection rules from it. In our approach, we first consider formation rule(s). We regard a definitional equation as expressing, by means of an ``if and only if'', the correspondence between the assumptions and the conclusion of an inference rule.

A set of formation rules need not be a singleton. In such cases, we postulate that the assumptions of multiple rules are combined by ``and'' in the corresponding definitional equation. The class of definitional equations obtained in this way is larger than that considered in Sambin et al. (2000). For example, the left rules of the additive conjunction yield the equation; $A\vdash \Delta$ and $B\vdash \Delta$ if and only if $A\with B\vdash\Delta$,  which is not permitted in Sambin et al.\ (2000). Nevertheless, the set of solvable definitional equations in our framework coincides with that of Sambin et al., since we apply the same solution procedure. Definitional equations that are additional in our terminology are simply unsolvable (in our sense).

For a given connective, two definitional equations can be obtained from its left and right rules. Sambin et al.\ consider only solvable cases, whereas we address the question of when definitional equations are unsolvable. From this analysis, we make explicit the forms of inference rules that render definitional equations solvable. Moreover, our proof of the equivalence between the DoI condition and the reflection principle shows that the DoI condition can be characterized by the success of unification between an assumption of one rule (a formation rule) and the conclusion of another rule (an explicit reflection rule).
In what follows, we assume that the contexts on the side containing the active formulas of a synthetic connective are empty. This is the visibility condition in Basic Logic. According to Sambin et al. (2000), ``a rule satisfies visibility if it operates on a formula (or two formulae) only if it is (they are) the only formula(e), either in the antecedent or in the succedent of a sequent (p.981)".

We denote by $Cxt_{\mathrm{Prem}}(E)$ (respectively, $Cxt_{\mathrm{Concl}}(E)$) the set of contexts occurring in the premises (respectively, conclusions) of the formation rules obtained from a definitional equation $E$.

%%%%% カット除去が成り立つ組のみを考える
\begin{rem}
In this paper, a logical connective is taken to be a pair of sets of right rules and left rules such that the main step of cut elimination holds between them. Sambin et al. argue that the cut elimination theorem is required to justify the validity of the cut rule (Sambin et al., 2000, pp.993-994). Accordingly, it is reasonable to restrict our attention to pairs of rules for which the main step of cut elimination is admissible.

\end{rem}

% 定義方程式の可解性の定義
\begin{defn}
Let $\mathcal{C}$ be a logical connective, let  $F$ be a formation rule obtained from $\mathcal{C}$ and let  $E$ be the definitional equation obtained from $F$. $E$ is solvable if the main step of the cut elimination holds between the explicit reflection rule(s) and $F$.

\end{defn}

Satisfiability of the main step of cut elimination and solvability of a definitional equation of a logical connective are not equivalent notions. For example, the definitional equation obtained from the tensor's right rule is unsolvable, although the tensor connective itself satisfies the main step of cut elimination.

% context changingの定義
\begin{defn}
Let $\mathcal{C}$ be a logical connective and  let $F$ be a formation rule obtained from $\mathcal{C}$.  $F$ is context changing if $Cxt_{Prem}(F)\neq Cxt_{Concl}(F)$ holds. The definitional equation obtained from $F$ is context changing if $F$ is context changing. 

\end{defn}

\begin{lem}\label{context changing}
Let $\mathcal{C}$ be a logical connective and  let $E$ be its one of definitional equation. If $E$ is context changing, then $E$ is unsolvable.
\end{lem}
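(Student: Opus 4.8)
The plan is to prove the lemma by exhibiting, for a context-changing $E$, one instance of the main step of cut elimination that cannot be closed. Without loss of generality I would assume the formation rule $F$ introduces $\mathcal{C}$ on the right; the left-rule case follows by the left--right symmetry built into the definitional-equation procedure. Writing $\mathcal C(\vec A)$ for $\mathcal C(A_1,\dots,A_n)$, visibility lets me present $F$ with premisses $\Gamma_i\vdash\Pi_i$ $(i=1,\dots,m)$, whose succedents consist of active formulas only, and conclusion $\Gamma_0\vdash\mathcal C(\vec A)$. Since $E$ is context changing, $Cxt_{Prem}(F)=\{\Gamma_1,\dots,\Gamma_m\}$ differs from $Cxt_{Concl}(F)=\{\Gamma_0\}$, so there is a branch $j$ with $\Gamma_j\neq\Gamma_0$. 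I would fix this $j$ and produce the explicit reflection rule coming from it.

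First I would run the procedure of Section~3 on branch $j$. Its implicit reflection rule is $\Gamma_0\vdash\mathcal C(\vec A)$ over $\Gamma_j\vdash\Pi_j$. The decisive observation is that making the premiss into the identity axiom $\mathcal C(\vec A)\vdash\mathcal C(\vec A)$ while keeping $\mathcal C(\vec A)$ present in the conclusion forces the antecedent meta-context $\Gamma_0$ to be instantiated so that $\Gamma_j\mapsto\mathcal C(\vec A)$ and every other premiss context is emptied. The conclusion then becomes $\mathcal C(\vec A)\vdash\Pi_j$, and cutting in a context generalises it to the explicit reflection rule with premiss $\Pi_j\vdash\Delta$ and conclusion $\mathcal C(\vec A)\vdash\Delta$, which mentions the active formulas of branch $j$ only. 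For $\tensor$ read off from its right rule this is exactly the forced passage into the additive, $\with$-style left rule, rather than the genuine $\tensor$-left rule.

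With this rule in hand I would set up the main step. Cutting its conclusion against the conclusion of $F$ gives
\[ \frac{\Gamma_0\vdash\mathcal C(\vec A)\qquad \mathcal C(\vec A)\vdash\Delta}{\Gamma_0\vdash\Delta}. \]
The main step must reproduce $\Gamma_0\vdash\Delta$ by permuting the cut above $\mathcal C(\vec A)$, that is, by cutting on the active formulas of $\Pi_j$; but that reduction consumes only the single formation premiss $\Gamma_j\vdash\Pi_j$ and yields $\Gamma_j\vdash\Delta$. The remaining premisses $\Gamma_i\vdash\Pi_i$ with $i\neq j$, and with them their contexts, are discarded. Since $\Gamma_j\neq\Gamma_0$, the reduced sequent differs from $\Gamma_0\vdash\Delta$ precisely on the antecedent—the side that $F$ changes—while the succedent $\Delta$ is untouched. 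Hence the main step does not hold, and by definition $E$ is unsolvable.

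The part I expect to require the most care is the forced shape of the explicit reflection rule, namely arguing that the identity substitution genuinely restricts to one branch for \emph{every} context-changing pattern, not merely the clean multiplicative split $\Gamma_0=\Gamma_1,\dots,\Gamma_m$. I would treat that representative split in full and then note that in the degenerate patterns, where the premiss contexts agree with one another but differ from $\Gamma_0$, the substitution either leaves the conclusion free of $\mathcal C(\vec A)$—so no explicit reflection rule for $\mathcal C$ is produced and $E$ is unsolvable outright—or again selects a single branch, whereupon the antecedent mismatch of the previous paragraph recurs. Unifying these cases, so that a branch with $\Gamma_j\neq\Gamma_0$ always survives into the reduced cut, is where the real work lies.
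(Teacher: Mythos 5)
Your proposal is correct and follows essentially the same route as the paper's proof: derive the explicit reflection rule from one branch of the formation rule, cut its conclusion against the conclusion of $F$ to get the conclusion-context sequent ($\Gamma_0\vdash\Delta$, the paper's $\Gamma'_1,\dots,\Gamma'_k\vdash\Delta$), observe that the reduced cut on the active formulas yields only the premise-context sequent ($\Gamma_j\vdash\Delta$, the paper's $\Gamma_i\vdash\Delta$), and conclude from the context mismatch that the main step of cut elimination fails. Your extra care about degenerate context patterns and your use of the full active multiset $\Pi_j$ (where the paper cuts on a single $A_{ii'}$) go slightly beyond the paper's presentation but do not change the argument.
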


\begin{proof}
We assume that $E$ is context changing. Let $F$ be one of formation rules of $E$. $F$ is written as follows (where $A_{ij}\in\{A_1,\dots,A_n\}$, $1\le i, l\le m$, $1\le j, i'\le k_l$, $\bigcup \Gamma'\subsetneq\bigcup\Gamma$).

\begin{center}
\AxiomC{$\Gamma_1\ \vdash\  A_{11},\dots,A_{1k_{1}}$}
\AxiomC{$\cdots$}
\AxiomC{$\Gamma_m\ \vdash\ A_{m1},\dots,A_{1k_{m}}$}
\TrinaryInfC{$\Gamma'_1,\dots,\Gamma'_k\ \vdash\ \mathcal{C}(A_1,\dots, A_n)$}
\DisplayProof
\end{center}

The implicit reflection rule has the following form.
\begin{center}
\def\fCenter{\ \vdash\ }
\Axiom$\Gamma'_1,\dots,\Gamma'_k\fCenter \mathcal{C}(A_1,\dots, A_n)$
\UnaryInf$\Gamma_i\fCenter  A_{ii'}$
\DisplayProof
\end{center}

The explicit reflection rule has the following form.
\begin{center}
\def\fCenter{\ \vdash\ }
\Axiom$A_{ii'}\fCenter \Delta$
\UnaryInf$ \mathcal{C}(A_1,\dots, A_n)\fCenter \Delta$
\DisplayProof
\end{center}

Then,  we obtain $\Gamma'_1,\dots,\Gamma'_k\vdash\Delta$ by the cut on $\mathcal{C}(A_1,\dots, A_n)$ and $\Gamma_i\vdash \Delta$ by the cut on $A_{ii'}$. 
Therefore, the main step of the cut elimination does not hold. Hence, $E$ is unsolvable.

\end{proof}

The following lemma was pointed out by  (Schroeder-Heister, 2012, p.498).

\begin{lem}\label{multi formation}

Let $\mathcal{C}$ be a logical connective and $E$ be a definitional equation of $\mathcal{C}$. If $E$ has several formation rules, then $E$ is unsolvable.
\end{lem}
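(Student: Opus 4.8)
The plan is to mirror, in the several-rules setting, the explicit-construction procedure that the excerpt performs for $\tensor$ and $\with$, and then to exhibit a single cut on the principal formula $\mathcal{C}(A_1,\dots,A_n)$ that cannot be reduced to cuts on its active subformulas. By the left--right symmetry of the definitional equations I may assume that the formation rules introduce $\mathcal{C}$ on the left, so that they are rules $F_1,\dots,F_m$ with $m\ge 2$, each of the shape $A_{i1},\dots,A_{ik_i}\vdash\Delta$ over $\mathcal{C}(A_1,\dots,A_n)\vdash\Delta$; and I may assume that $E$ is not context changing, since otherwise Lemma \ref{context changing} already yields unsolvability. Combining the premisses by ``and'' and running the procedure exactly as in the $\with$ case, the implicit reflection splits $\mathcal{C}(A_1,\dots,A_n)\vdash\Delta$ into its $m$ premiss-sequents $A_{i1},\dots,A_{ik_i}\vdash\Delta$; substituting $\mathcal{C}(A_1,\dots,A_n)$ for the context $\Delta$ and then generalising the active formulas by cuts yields $m$ explicit reflection rules $R_1,\dots,R_m$, where $R_i$ introduces $\mathcal{C}(A_1,\dots,A_n)$ on the right from exactly the premisses $\Gamma_{i1}\vdash A_{i1},\dots,\Gamma_{ik_i}\vdash A_{ik_i}$ carrying the active formulas of $F_i$.

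The crux is then a mismatched cut. Since $m\ge 2$ I may fix indices $i\neq i'$ and cut the conclusion $\Gamma_{i1},\dots,\Gamma_{ik_i}\vdash\mathcal{C}(A_1,\dots,A_n)$ of $R_i$ against the conclusion $\mathcal{C}(A_1,\dots,A_n)\vdash\Delta$ of $F_{i'}$ on the principal formula, obtaining $\Gamma_{i1},\dots,\Gamma_{ik_i}\vdash\Delta$. For the main step of the cut elimination to hold, this cut would have to be reducible to cuts on the active subformulas; but $R_i$ supplies the premisses $\Gamma_{ij}\vdash A_{ij}$ carrying the active formulas of $F_i$, whereas $F_{i'}$ is discharged from the premiss $A_{i'1},\dots,A_{i'k_{i'}}\vdash\Delta$ carrying the active formulas of $F_{i'}$. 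The reduction can therefore go through only if the two multisets $\{A_{i1},\dots,A_{ik_i}\}$ and $\{A_{i'1},\dots,A_{i'k_{i'}}\}$ coincide.

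The main obstacle is exactly to rule this coincidence out, that is, to show that distinct formation rules carry distinct active formulas. Here I would invoke the visibility condition, which forces the context on the active side of each formation rule to be empty, together with the standing non-context-changing assumption, which keeps $\Delta$ fixed between premiss and conclusion; under these two constraints a left formation rule is completely determined by the multiset of active formulas in its premiss, so that $F_i$ and $F_{i'}$ being \emph{distinct} rules forces $\{A_{i1},\dots,A_{ik_i}\}\neq\{A_{i'1},\dots,A_{i'k_{i'}}\}$. Consequently the cut of $R_i$ against $F_{i'}$ admits no reduction to subformula cuts, the main step of the cut elimination fails, and by the definition of solvability $E$ is unsolvable. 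I expect the bookkeeping of the explicit-reflection construction to be routine, and the only genuinely delicate point to be this determinacy of a formation rule by its active formulas.
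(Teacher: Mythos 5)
Your proposal is correct and takes essentially the same route as the paper's own proof: reduce to the non-context-changing case via Lemma \ref{context changing}, construct the explicit reflection rules separately from each formation rule, and exhibit a cross cut between the explicit reflection rule obtained from $F_i$ and the formation rule $F_{i'}$ ($i\neq i'$) whose reduction fails because distinct formation rules must carry distinct active formulas --- a determinacy claim the paper dispatches with the one-liner ``otherwise $F_i=F_j$'' and which you additionally justify via visibility and the fixed context. The only cosmetic difference is that you place the formation rules on the left where the paper places them on the right, which is immaterial by the left--right symmetry both texts invoke.
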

\begin{proof} 
If $E$ is context changing, then it is unsolvable by Lemma \ref{context changing}. Hence, we assume that  contexts of  formation rules are the same in its premise and conclusion. Let formation rules of $E$ be $F_1,\dots,F_m$  (where $m$ is a natural number).
We assume that $F_i$ ($1\le i\le m$) has the following form  (where $m\in\mathbb{N}$, $A^{i}_{pq}\in\{A_1,\dots,A_n\}$, $1\le p, g\le m$, $1\le  q\le k_g$).

\begin{center}
\AxiomC{$\Gamma\ \vdash\  A^{i}_{11},\dots,A^{i}_{1k_{1}}$}
\AxiomC{$\cdots$}
\AxiomC{$\Gamma\ \vdash\ A^{i}_{m1},\dots,A^{i}_{1k_{m}}$}
\TrinaryInfC{$\Gamma\ \vdash\ \mathcal{C}(A_1,\dots, A_n)$}
\DisplayProof
\end{center}

We assume that an another formation rule $F_j$ ($j\neq i$ and $1\le j\le m$) has the following form (where $A^{j}_{pq}\in\{A_1,\dots,A_n\}$, $1\le p, g\le m$, $1\le q \le l_g$).

\begin{center}
\AxiomC{$\Gamma\ \vdash\  A^{j}_{11},\dots,A^{j}_{1l_{1}}$}
\AxiomC{$\cdots$}
\AxiomC{$\Gamma\ \vdash\ A^{j}_{m1},\dots,A^{j}_{1l_{m}}$}
\TrinaryInfC{$\Gamma\ \vdash\ \mathcal{C}(A_1,\dots, A_n)$}
\DisplayProof
\end{center}

An assumption of the explicit reflection rule obtained from $F_i$ is $A^{i}_{ss'}\vdash\Delta$ ($1\le s, g\le m$, $1\le s'\le k_g$), and an assumption of the explicit reflection rule obtained from $F_j$ is $A^{j}_{tt'}\vdash\Delta$ ($1\le t, g\le m$, $1\le t'\le k_g$). There is a pair of different formulas in $A^{i}_{ss'}$ and $A^{j}_{tt'}$. Otherwise, $F_i=F_j$ holds and it contradicts our assumption. The main step of the cut elimination between a pair of different formulas in $A^{i}_{ss'}$ and $A^{j}_{tt'}$ does not hold. Hence, $E$ is unsolvable.

%$F_i$の前提と$A^{j}_t\vdash\Delta$を組み合わせるとカット除去が成り立たない。よって$E$は可解ではない。 要検討

\end{proof}

%thm 統合的論理結合子

\begin{thm}
A logical connective $\mathcal{C}$ satisfies the deducibility of identicals condition if and only if $\mathcal{C}$ satisfies the reflection principle.
\end{thm}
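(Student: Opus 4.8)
The plan is to pin down a single combinatorial fact --- a \emph{matching} between the premises of a connective's formation rule and its explicit reflection rules --- and to show that this matching is exactly what makes the identity derivation close and, at the same time, exactly the data consumed by the principal-formula step of cut elimination. Using the notation of Figure~\ref{synthetic_rule_1} (the dual case of Figure~\ref{synthetic_rule_2} being symmetric), the matching reads: the single right rule has premises $\Gamma\vdash A_{j1},\dots,A_{jk_{j}}$ for $j=1,\dots,m$, and for each such $j$ there is a left rule whose premise has active antecedent $A_{j1},\dots,A_{jk_{j}}$, and conversely every left rule arises this way. Before either direction I would record the structural normal form: by the standing convention of the Remark above, $\mathcal{C}$ is a pair of rule sets admitting the main step, and by Lemmas~\ref{context changing} and~\ref{multi formation} any solvable equation has a \emph{unique}, \emph{non-context-changing} formation rule, so on the formation side there is one rule whose premises and conclusion carry the same context.

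For reflection $\Rightarrow$ DoI I would start from a formation rule $F$ whose equation $E$ is solvable. By the two lemmas $F$ is the unique rule of its side and is not context changing, and the remaining rules of $\mathcal{C}$ are precisely the explicit reflection rules read off from $F$; solvability is the main step between $F$ and those rules, i.e. the matching. I would then build the derivation of $\mathcal{C}(A_1,\dots,A_n)\vdash\mathcal{C}(A_1,\dots,A_n)$ by first applying $F$ to the displayed succedent, which is forced: beginning on the opposite side with the $i$-th reflection rule leaves, for every $j\neq i$, a leaf $A_{i1},\dots,A_{ik_{i}}\vdash A_{j1},\dots,A_{jk_{j}}$ that is not an axiom. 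Applying $F$ instead reduces the goal to the premises $\mathcal{C}(A_1,\dots,A_n)\vdash A_{j1},\dots,A_{jk_{j}}$, and applying to each the matching left rule turns every leaf into an identity $A_{j1},\dots,A_{jk_{j}}\vdash A_{j1},\dots,A_{jk_{j}}$ closed by the reflexivity axioms. The derivation uses only rules of $\mathcal{C}$ and reflexivity, so DoI holds.

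For DoI $\Rightarrow$ reflection I would read the matching off a DoI derivation. Since the identity must be closed with reflexivity and without cut, the procedure is forced to decompose one occurrence of $\mathcal{C}(A_1,\dots,A_n)$ by a single rule and the other occurrence by the rules of the opposite side; the failure analysis of the previous paragraph shows the rule applied first can be neither context changing (otherwise a leaf $\Gamma'_1,\dots,\Gamma'_k\vdash A_{j1},\dots,A_{jk_{j}}$ with $\bigcup\Gamma'\subsetneq\bigcup\Gamma$ survives) nor one of several rules of its side (otherwise, as in the proof of Lemma~\ref{multi formation}, some leaf pairs two distinct active formulas). I would designate this first rule as the formation rule $F$, form $E$ by binding its premises and conclusion with ``if and only if'', and note that the rules applied second are exactly the explicit reflection rules generated by $E$. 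The bijective pairing of leaves witnessing closure of the identity is the same pairing required by the main step; hence $E$ is solvable and $\mathcal{C}$ satisfies the reflection principle.

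The step I expect to be the main obstacle is not any single reduction but the bookkeeping around the \emph{choice of side} together with the exactness of the closing step. The whole argument turns on the assertion that the identity derivation closes with reflexivity axioms \emph{if and only if} each premise of the formation rule is answered, on the opposite side, by a reflection rule carrying the identical list of active formulas --- and that this is literally the pairing used in the principal case of cut elimination. Making precise that the derivation is \emph{forced} to begin on the formation side, and excluding degenerate closures (spurious contraction or weakening, or reflexivity applied to compound sequents), is where the visibility hypothesis and Lemmas~\ref{context changing} and~\ref{multi formation} must be invoked with care; the remaining work --- translating between a one-rule/many-rule synthetic presentation and its definitional equation --- is routine.
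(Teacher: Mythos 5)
Your overall route is the paper's own. In the if-direction the paper builds exactly your derivation---instantiate the formation rule's context with $\mathcal{C}(A_1,\dots,A_n)$, answer each premise with the matching explicit reflection rule, and trivialize context variables to close with axioms---and in the only-if direction it likewise designates the rules applied first (near the leaves) as explicit reflection rules and the rule applied last as the formation rule, reads the definitional equation off the latter, and recovers solvability. Your additions (invoking Lemmas~\ref{context changing} and~\ref{multi formation} up front, and arguing that the derivation is \emph{forced} to begin on the formation side) do not appear in the paper's proof of this theorem: they are harmless, but unnecessary for the if-direction, where mere existence of one closing derivation suffices, and in the only-if direction the paper obtains the two-phase shape of the derivation directly from the Naibo--Petrolo formulation of DoI (``apply the left (resp.\ right) rule(s), then immediately conclude with the right (resp.\ left) rule(s)'') rather than from a failure analysis.

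One step of yours is genuinely off, and it is precisely the point where the paper is careful. You close each branch at a leaf $A_{j1},\dots,A_{jk_j}\vdash A_{j1},\dots,A_{jk_j}$ ``by the reflexivity axioms''; but when $k_j>1$ this compound sequent is not an instance of reflexivity and cannot be closed without rules other than those of $\mathcal{C}$, which DoI forbids. The source of the slip is your matching, which pairs the $j$-th formation premise with a \emph{single-premise} reflection rule carrying the whole active list $A_{j1},\dots,A_{jk_j}$. The explicit reflection rules produced by the solving procedure are not of that shape: the cuts that eliminate the implicit rule introduce one premise \emph{per active formula}, each premise having a single propositional variable on one side and a fresh context variable on the other (for $\tensor$: premises $\Gamma\vdash A$ and $\Gamma'\vdash B$, not $\Gamma\vdash A,B$). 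The paper's if-direction exploits exactly this---``one side of the assumption of explicit reflection rule(s) contains one propositional variable and the other side contains context variable(s); we trivialize the context variable and obtain an axiom''---so every leaf is a genuine axiom $A_{jq}\vdash A_{jq}$. With the reflection rules taken in this solved form your argument goes through; as literally stated, your closing step fails whenever some premise of the formation rule carries more than one active formula. A smaller gloss: in the only-if direction you assert the rules applied second ``are exactly the explicit reflection rules generated by $E$'', but the DoI derivation exhibits only instances with specific contexts; the paper supplies the missing generalization by cutting the intermediate sequent $S$ against arbitrary contexts to restore the context variables.
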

\begin{proof}

(if-direction) We assume that $\mathcal{C}$ satisfies the reflection principle. We apply explicit reflection rule(s) to axiom(s). Let $S_0$ be the resulting sequent(s). One side of the assumption of explicit reflection rule(s) contains one propositional variable and the other side contains context variable(s). We trivialize the context variable and obtain an axiom. Hence, the application of the explicit reflection rule(s) is possible.  Next, we consider a formation rule.  A formation rule contains the logical connective in one side and a context in the other side. We replace this context with the formula $\mathcal{C}(A_1,\dots, A_n)$. Let  $S_1$ be the sequent(s) in assumptions of this instance.  Matching between one side of sequent in the assumption of a formation rule and one in conclusion of an explicit reflection rule succeeds because of the construction of explicit reflection rules. Hence, the matching between $S_0$ and $S_1$ succeeds and the deducibility of identicals condition holds.  

(only-if direction) By definition of DoI, the sequent $\mathcal{C}(A_1,\dots,A_n)\vdash\mathcal{C}(A_1,\dots \\
,A_n)$ is obtainable by applying (several) left (resp. right) rule(s) and then a right (resp. left) rule. We regard the rule(s) we first apply as the explicit reflection rule and the rules we secondly apply as the formation rule.  %Let $S_0$ be the set of sequents obtained from axioms by applying explicit reflection rule(s).  
We replace  contexts in a definitional equation with the formula  $\mathcal{C}(A_1,\dots,A_n)$. From one direction of a definitional equation, we obtain a sequent $S$ because the identity  $\mathcal{C}(A_1,\dots,A_n)\vdash\mathcal{C}(A_1,\dots,A_n)$ holds. We can replace active subformulas of  $\mathcal{C}(A_1,\dots,A_n)$ in $S$ with contexts by the cut. Hence, the reflection principle holds.
\end{proof}

\begin{cor}\label{DoI_equivalence}
A pair of sets of inference rules $(L, R)$ satisfies the main step of cut elimination and the deducibility of identicals condition if and only if it satisfies the reflection principle.
\end{cor}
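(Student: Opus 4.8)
The plan is to reduce the Corollary to the preceding Theorem by carefully unpacking the two definitional conventions established earlier in this section: the Remark's stipulation that a \emph{logical connective} is, by definition, a pair of rule sets $(L,R)$ between which the main step of the cut elimination holds, and the Definition of \emph{solvability}, which declares a definitional equation solvable precisely when the main step of the cut elimination holds between the explicit reflection rule(s) and the formation rule. Once these are in place, neither direction requires a fresh proof-search or cut-elimination argument; each is an application of the Theorem together with one of these conventions.

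For the forward direction, I would assume that $(L,R)$ satisfies both the main step of the cut elimination and the deducibility of identicals condition. The main-step hypothesis is exactly what is needed to regard $(L,R)$ as a logical connective $\mathcal{C}$ in the sense of the Remark. Having licensed this identification, I would invoke the ``only if'' half of the Theorem to conclude that $\mathcal{C}$, and hence $(L,R)$, satisfies the reflection principle.

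For the backward direction, I would assume that $(L,R)$ satisfies the reflection principle. The key observation is that satisfying the reflection principle already carries the main step with it: by the Definition of solvability, a connective obeys the reflection principle only when its definitional equation is solvable, and solvability is nothing other than the holding of the main step of the cut elimination between the explicit reflection rule(s) and the formation rule. Thus the main step follows immediately, which in turn certifies that $(L,R)$ is a logical connective; applying the ``if'' half of the Theorem then yields the deducibility of identicals condition, completing the equivalence.

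The hard part, such as it is, is not a computation but a matter of bookkeeping: one must check that the main step of the cut elimination is genuinely built into the notion of satisfying the reflection principle via the solvability clause, so that it need not be re-established in the backward direction, while in the forward direction it must be assumed separately in order to promote $(L,R)$ to a logical connective and thereby gain access to the Theorem. Making this dependency explicit is precisely the content the Corollary adds to the Theorem.
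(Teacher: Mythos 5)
Your proof is correct and matches the paper's (implicit) argument: the paper states this corollary without a separate proof precisely because, as you observe, it follows from the preceding theorem once one unpacks the Remark's convention that a logical connective is a pair $(L,R)$ admitting the main step of the cut elimination, and the Definition making the main step part of solvability, hence of satisfying the reflection principle. Your bookkeeping of where the main-step hypothesis is assumed (forward direction) versus recovered (backward direction) is exactly the intended content of the corollary.
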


The uniqueness condition is easily followed from the definitional equations.

\begin{lem}\label{get_uniqueness}
If the definitional equation of $\mathcal{C}(A_1,\dots, A_n)$ is solvable, then $\mathcal{C}(A_1,\dots, A_n)$ satisfies uniqueness condition.
\end{lem}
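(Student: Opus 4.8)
The plan is to prove uniqueness by exhibiting mutual derivability between two connectives sharing the same inference rules, using the solvability of the definitional equation as the engine. Let $\mathcal{C}$ and $\mathcal{C}'$ have the same rules, and recall that the uniqueness condition requires both $\mathcal{C}(A_1,\dots,A_n)\vdash\mathcal{C}'(A_1,\dots,A_n)$ and $\mathcal{C}'(A_1,\dots,A_n)\vdash\mathcal{C}(A_1,\dots,A_n)$ to be derivable. First I would observe that solvability of the definitional equation of $\mathcal{C}$ means, by definition, that the main step of cut elimination holds between the formation rule $F$ and the explicit reflection rule(s), and hence by Corollary \ref{DoI_equivalence} the connective satisfies the reflection principle and DoI. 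The strategy is to build each cross-derivation out of exactly one explicit reflection rule followed by one formation rule, just as in the DoI derivation of $\mathcal{C}(A_1,\dots,A_n)\vdash\mathcal{C}(A_1,\dots,A_n)$, but replacing the connective on the lower side by the primed copy.

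The key steps, in order, are as follows. First I would apply the explicit reflection rule(s) of $\mathcal{C}$ (which are shared by $\mathcal{C}'$) to the reflexivity axioms, exactly as in the if-direction of the equivalence theorem: one side of each premise carries a single propositional variable $A_{ij}$ and the other side carries context variables which we trivialize to obtain axioms, yielding sequents $S_0$ whose active formulas are the $A_{ij}$ together with the formula $\mathcal{C}(A_1,\dots,A_n)$ on the appropriate side. Second, I would apply the formation rule of $\mathcal{C}'$ to the sequents $S_0$; since $\mathcal{C}$ and $\mathcal{C}'$ share formation rules, the premise-patterns $S_1$ of the formation rule match the $S_0$ by the same matching argument used in the equivalence theorem, producing $\mathcal{C}(A_1,\dots,A_n)\vdash\mathcal{C}'(A_1,\dots,A_n)$. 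Third, I would run the symmetric construction, starting from the reflection rule(s) of $\mathcal{C}'$ and closing with the formation rule of $\mathcal{C}$, to obtain the reverse sequent $\mathcal{C}'(A_1,\dots,A_n)\vdash\mathcal{C}(A_1,\dots,A_n)$.

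The main obstacle I anticipate is verifying that the matching between the conclusion of the explicit reflection rule of one connective and the premise of the formation rule of the other succeeds even though the two connectives are formally distinct symbols. This is precisely where the solvability assumption does the work: because the explicit reflection rules are constructed by cutting the identity $\mathcal{C}(A_1,\dots,A_n)\vdash\mathcal{C}(A_1,\dots,A_n)$ against the formation-rule premises, their conclusions are unified with the formation-rule premises by design, and this unification depends only on the shape of the rules, not on which connective symbol sits in the principal position. Hence replacing $\mathcal{C}$ by $\mathcal{C}'$ in the lower sequent leaves the matching of active formulas intact. I would therefore appeal to the matching argument already established in the proof of the equivalence theorem, noting that it refers only to the rule schemes and the construction of the explicit reflection rules, so it transfers verbatim to the mixed derivation. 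Once both directions are derived, the uniqueness condition follows immediately by definition.
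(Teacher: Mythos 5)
Your proposal is correct and is essentially the paper's own argument: the paper composes the definitional equations of $\mathcal{C}$ and $\mathcal{C}^{\ast}$ (whose non-connective sides coincide because the rules coincide) and substitutes the context variable with each connective in turn, letting the identity discharge one side --- and your mixed derivation, explicit reflection rule(s) of one connective applied to trivialized axioms and then closed by the other connective's formation rule via the matching argument of the equivalence theorem, is precisely the derivation-level unfolding of that composition and substitution. The only caveat is direction bookkeeping (e.g.\ in the tensor-style case, reflection of $\mathcal{C}$ followed by formation of $\mathcal{C}'$ yields $\mathcal{C}'(A_1,\dots,A_n)\vdash\mathcal{C}(A_1,\dots,A_n)$ rather than the converse), which is harmless since your symmetric second construction supplies the other sequent in any case.
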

\begin{proof}
Let $\mathcal{C}^{\ast}(A_1,\dots, A_n)$ be a connective that has the same inference rule as $\mathcal{C}$. We consider the definitonal equation of $\mathcal{C}^{\ast}$. The right side of it is the same as $\mathcal{C}$.
It is possible to compose these definitional equations because these are solvable. We substitute the context variable in the obtained equation for  $\mathcal{C}(A_1,\dots, A_n)$ and $\mathcal{C}^{\ast}(A_1,\dots, A_n)$, in turn.
The one side in the equation is identity and we obtain  $\mathcal{C}(A_1,\dots, A_n)\vdash\mathcal{C}^{\ast}(A_1,\dots, A_n)$ and $\mathcal{C}^{\ast}(A_1,\dots, A_n)\vdash \mathcal{C}(A_1,\dots, A_n)$.

\end{proof}

\begin{cor}
A pair of sets of inference rules $(L, R)$ satisfies the main step of cut elimination and uniqueness condition if and only if it satisfies the reflection principle.
\end{cor}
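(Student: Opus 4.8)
The plan is to reduce the statement to the already-established Corollary \ref{DoI_equivalence}, which asserts that a pair $(L,R)$ satisfies the main step of cut elimination together with the deducibility of identicals condition if and only if it satisfies the reflection principle. Since the target corollary and Corollary \ref{DoI_equivalence} both carry the same ``main step of cut elimination'' clause, the essential task is to trade the DoI condition for the uniqueness condition in the presence of that clause. Concretely, I would establish separately that (i) uniqueness implies DoI, and (ii) the reflection principle implies uniqueness; combined with Corollary \ref{DoI_equivalence}, these two implications close both directions.

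For the backward direction (main step and uniqueness $\Rightarrow$ reflection), I would invoke the fact recorded in the preliminaries that the DoI condition is strictly weaker than the uniqueness condition, so that uniqueness implies DoI. Hence a pair $(L,R)$ satisfying the main step of cut elimination together with the uniqueness condition a fortiori satisfies the main step together with DoI, and Corollary \ref{DoI_equivalence} then delivers the reflection principle at once. This direction is essentially immediate once the implication from uniqueness to DoI is granted.

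For the forward direction (reflection $\Rightarrow$ main step and uniqueness), I would first apply Corollary \ref{DoI_equivalence} to extract the main step of cut elimination from the reflection principle; this supplies the ``main step'' half of the conjunction. To obtain uniqueness, I would observe that a connective satisfying the reflection principle has, by the very definition of solvability, a solvable definitional equation: its rules are precisely a formation rule and the explicit reflection rule(s) between which the main step of the cut elimination holds. Lemma \ref{get_uniqueness} then converts solvability of the definitional equation into the uniqueness condition, completing this direction.

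The step I expect to require the most care is the forward passage from the reflection principle to uniqueness, since it hinges on making explicit the identification of ``satisfying the reflection principle'' with ``having a solvable definitional equation,'' so that Lemma \ref{get_uniqueness} becomes applicable. Once that identification is spelled out, the rest is a routine chaining of Corollary \ref{DoI_equivalence}, the relation that uniqueness implies DoI, and Lemma \ref{get_uniqueness}.
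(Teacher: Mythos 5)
Your proposal is correct and matches the paper's own proof, which simply cites Corollary \ref{DoI_equivalence} and Lemma \ref{get_uniqueness}; you use exactly these two ingredients, merely making explicit the steps the paper leaves implicit (the implication from uniqueness to DoI recorded in the preliminaries, and the identification of the reflection principle with solvability of the definitional equation needed to apply Lemma \ref{get_uniqueness}). No gap; your elaboration is a faithful unpacking of the paper's one-line argument.
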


\begin{proof}
It follows from Corollary \ref{DoI_equivalence} and Lemma \ref{get_uniqueness}.
\end{proof}

We give a characterization of synthetic connectives.

\begin{thm}\label{charact of synn}
A logical connective $\mathcal{C}$ (its arity is an arbitrary natural number $n$) satisfies the reflection principle if and only if  $\mathcal{C}$ is a synthetic connective.
\end{thm}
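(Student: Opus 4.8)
The plan is to pass through Corollary \ref{DoI_equivalence}, which identifies satisfaction of the reflection principle with the main step of cut elimination together with the deducibility of identicals, and hence with solvability of $\mathcal{C}$'s definitional equation. Thus it suffices to show that the definitional equation of a connective is solvable exactly when the connective's rules instantiate one of the two synthetic schemes of Figures \ref{synthetic_rule_1} and \ref{synthetic_rule_2}.

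For the only-if direction I would assume the reflection principle, hence a solvable equation, and read off the shape of the formation rule $F$ from the two structural lemmas. Lemma \ref{multi formation} forces the equation to possess a single formation rule, and Lemma \ref{context changing} forces that rule to be non-context-changing, so that one and the same context is shared by the premises and the conclusion of $F$. Under the visibility condition the active formulas stand alone on their side, so the conclusion of $F$ is $\Gamma\vdash\mathcal{C}(A_1,\dots,A_n)$ or $\mathcal{C}(A_1,\dots,A_n)\vdash\Delta$, and each premise carries the shared context on one side and a row $A_{i1},\dots,A_{ik_i}$ of active formulas alone on the other --- precisely a right (resp.\ left) rule of scheme I (resp.\ II). Since $\mathcal{C}$ obeys the reflection principle, its dual rules are the explicit reflection rules obtained from $F$ by the canonical procedure (substitute $\mathcal{C}(A_1,\dots,A_n)$ for the context to reach the identity, then cut each active subformula against a fresh context); this procedure carries $F$ to the matching scheme rule, splitting the conclusion's single context into one part per formula of the chosen row. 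Hence $\mathcal{C}$ is synthetic.

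For the if direction I would take $\mathcal{C}$ synthetic and verify the two ingredients required by Corollary \ref{DoI_equivalence} by inspection of the schemes. The main step of cut elimination is direct: a cut on $\mathcal{C}(A_1,\dots,A_n)$ whose one premise ends in the rule retaining all rows over the shared context and whose other premise ends in the rule selecting the row $i_0$ is replaced by iterated cuts on the formulas $A_{i_0 1},\dots,A_{i_0 k_{i_0}}$ that the two rules share. For the deducibility of identicals I would decompose $\mathcal{C}(A_1,\dots,A_n)\vdash\mathcal{C}(A_1,\dots,A_n)$ on one side and then on the other, using the selecting rule to assemble the active row from the reflexivity axioms $A_{ij}\vdash A_{ij}$ and the remaining rule to close the derivation, invoking at least one rule of $\mathcal{C}$ and only reflexivity axioms as DoI demands. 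Corollary \ref{DoI_equivalence} then delivers the reflection principle.

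The step I expect to be the main obstacle is the matching in the only-if direction: establishing that solvability, the two lemmas and visibility together admit no formation rule other than a scheme rule, and that the canonical reflection procedure reproduces exactly the connective's dual rules rather than some mismatched fragment. The delicate bookkeeping lies in the contexts --- a premise-row of length $k_i$ must induce a $k_i$-fold split of the dual rule's context --- and in invoking the main step of cut elimination to guarantee that the rows occurring in $F$ coincide with those occurring in the explicit reflection rules, so that the two halves genuinely instantiate one and the same scheme.
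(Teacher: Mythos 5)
Your proposal is correct and takes essentially the same route as the paper: the only-if direction rests on exactly Lemmas \ref{multi formation} and \ref{context changing} (plus visibility) to force a single, non-context-changing formation rule of the synthetic scheme shape, which is the entire content of the paper's argument. Where the paper dismisses the converse as following ``by easy calculation,'' you usefully spell it out via Corollary \ref{DoI_equivalence} (verifying the main cut-elimination step and DoI for the schemes) and you also make explicit the dual-rule matching the paper leaves implicit, but this is elaboration of the same proof rather than a different method.
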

\begin{proof}
We assume that  $\mathcal{C}$ appears in the succedent of a sequent.  We also assume that $\mathcal{C}$ satisfies the reflection principle. By Lemma \ref{multi formation}, the set of the formation rules of $\mathcal{C}$ is singleton. By Lemma \ref{context changing}, the contexts in the assumption and conclusion are the same. Hence,  one of the rules of $\mathcal{C}$ has the following form (where $m\in\mathbb{N}$, $A_{ij}\in\{A_1,\dots,A_n\}$, $1\le i, l\le m$, $1\le j, i'\le k_l$).

\begin{center}
\AxiomC{$\Gamma\ \vdash\  A_{11},\dots,A_{1k_{1}}$}
\AxiomC{$\cdots$}
\AxiomC{$\Gamma\ \vdash\ A_{m1},\dots,A_{1k_{m}}$}
\TrinaryInfC{$\Gamma\ \vdash\ \mathcal{C}(A_1,\dots, A_n)$}
\DisplayProof
\end{center}

Hence, $\mathcal{C}$ is a synthetic connective.
The reverse direction follows by easy calculation.

\end{proof}
%ENG
Sambin et al. conjectured that only six definitional equations - corresponding to two conjunctions, two disjunctions, and two implications - are solvable (Sambin et al., 2000, p.985). According to our result, in addition to these six equations, definitional equations obtained from synthetic connectives are also solvable. Of course, if synthetic connectives are regarded as reducible to conjunctions and disjunctions, then Sambin et al.'s conjecture can be maintained.

\section{Conclusion}
We have proved the equivalence between the principle of reflection and the deducibility of identicals condition in the absence of implication (which requires nested sequents).  Moreover, we have shown that a logical connective satisfies the principle of reflection if and only if it is a synthetic connective. Finally, we have demonstrated that the property of having a singleton set of formation rules is essential for satisfying the principle of reflection.

%反映原理と同一律除去則の同様な点

% Sambin予想について一言

%なぜ同一律除去則を満たす必要があるのか

%%%%%%%%%%%%%%%%%%%%%%%%%%%%%%%%%%%%%%%%%%%%%%%%%%%%%%%%%

\end{document}